\newtheorem{thm}{Theorem}[section] 
\newtheorem{lem}[thm]{Lemma}
\theoremstyle{definition}
\numberwithin{equation}{section} 
\newcommand{\N}{\Bbb{ N}}
\newcommand{\setsuchthat}{\,\, \pmb{|} \,\,}
\DeclareMathOperator{\Pol}{Pol}
\DeclareMathOperator{\Inv}{Inv}
\newcommand{\PIC}{\Pol \, \Inv C}
\newcommand{\Loc}[1]{\Pol \, \Inv^{[#1]}}
\DeclareMathOperator{\Loo}{Loc}
\newcommand{\Lo}[1]{\Loo_{#1}}
\newcommand{\ar}[1]{^{[#1]}}
\newcommand{\algop}[2]{\langle {#1}, {#2} \rangle}
\title{On the local closure of clones on countable sets}
\author{Erhard Aichinger}
\subjclass[2010]{08A40}
\keywords{clones, local closure}
\thanks{Supported by the Austrian Science Fund (FWF): P24077}
\begin{document}
\begin{abstract}
  We consider clones on countable sets. If such a clone
  has quasigroup operations, is locally closed and countable,
  then there is a function $f : \N \to \N$ such that
  the $n$-ary part of $C$ is equal to the $n$-ary part
  of $\Loc{f(n)} C$, where $\Inv\ar{f(n)} C$ denotes the set of
  $f(n)$-ary invariant relations of $C$.
\end{abstract}
\maketitle

\section{Results} \label{sec:res}
We investigate clones on infinite sets \cite{PK:FUR, Sz:CIUA, GP:ASOC}.
For a clone $C$ on
$A$, its local closure $\overline{C}$ consists of all those finitary operations
on $A$ that can be interpolated at each finite subset of their domain
by a function in $C$, and we have $\overline{C} = \PIC$. Here, as in \cite{PK:FUR},
$\Inv C$ denotes the set of those finitary relations on $A$ that are preserved
by all functions in $C$, and for a set $R$  of relations on $A$, $\Pol \, R$ denotes the
set of those finitary operations on $A$ that preserve all relations in $R$.
A clone is called \emph{locally closed} if it is equal to
its local closure. 
$C$ is called a \emph{clone with quasigroup operations}
if there are three binary operations $\cdot, \backslash, / \, \in C$ such that
$\algop{A}{\cdot, \backslash, /}$ is a quasigroup \cite[p.24]{BS:ACIU}.
Theorem~\ref{thm:1} states that a 
clone with quasigroup operations on a countable set
is either locally closed, or its local closure  $\PIC$ is uncountable. 
\begin{thm} \label{thm:1}
   Let $A$ be a set with $|A| = \aleph_0$, and let $C$ be a clone
   with quasigroup operations. If $\left|\PIC\right| \le \aleph_0$, then
   $C = \PIC$.
\end{thm}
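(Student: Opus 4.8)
The plan is to prove the contrapositive: assuming $C \neq \PIC$, I will show that $\PIC$ is uncountable. Since $\PIC = \overline{C}$ is a clone, it suffices to find a single arity $n$ for which the $n$-ary part $\overline{C}\ar{n}$ is uncountable. I would regard the set of all $n$-ary operations on $A$ as a topological space under pointwise convergence; because $A$ is countable this space is Polish (indeed homeomorphic to Baire space), and $\overline{C}\ar{n}$, being a local closure, is a closed subset. The decisive structural input I would invoke is the classical perfect-set theorem: a nonempty closed subset of a Polish space with no isolated points contains a copy of the Cantor set and hence has cardinality $2^{\aleph_0}$. Thus the whole argument reduces to exhibiting, for some $n$, a nonempty closed $\overline{C}\ar{n}$ that is dense-in-itself.

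First I would locate a witness of non-closure. Since $C \neq \overline{C}$, fix $n$ and an operation $f \in \overline{C}\ar{n} \setminus C\ar{n}$. By the definition of local interpolation, choose for each finite $S \subseteq A^n$ an operation $g_S \in C$ agreeing with $f$ on $S$. If $f$ were isolated in $\overline{C}\ar{n}$, then some finite $S$ would pin down $f$ among all members of $\overline{C}\ar{n}$ that agree with it on $S$; but $g_S$ is such a member, forcing $g_S = f$ and contradicting $f \notin C$. Hence $f$ is a non-isolated point of $\overline{C}\ar{n}$.

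The heart of the matter is to propagate non-isolation from the single point $f$ to every point of $\overline{C}\ar{n}$, and here the quasigroup operations enter. Writing $\cdot, \backslash, /$ for the quasigroup operations of $C$, note that since $\overline{C}$ is a clone containing them, for any $v \in \overline{C}\ar{n}$ the pointwise maps $\Phi_v(w) = v \cdot w$ and $\Psi_v(z) = v \backslash z$ send $\overline{C}\ar{n}$ into itself. The quasigroup identities $v \backslash (v \cdot w) = w$ and $v \cdot (v \backslash z) = z$ make $\Psi_v$ a two-sided inverse of $\Phi_v$, and both maps are continuous for pointwise convergence; thus $\Phi_v$ is a self-homeomorphism of $\overline{C}\ar{n}$. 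Given any target $u \in \overline{C}\ar{n}$, the operation $v := u / f$ again lies in $\overline{C}\ar{n}$, and the identity $(u/f)\cdot f = u$ gives $\Phi_v(f) = u$. Since homeomorphisms preserve (non-)isolation, every $u \in \overline{C}\ar{n}$ inherits non-isolation from $f$. Therefore $\overline{C}\ar{n}$ is a nonempty closed set with no isolated points, so by the perfect-set theorem $|\overline{C}| \ge |\overline{C}\ar{n}| = 2^{\aleph_0}$, contradicting $|\PIC| \le \aleph_0$.

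I expect the main obstacle to be not any single hard estimate but the correct packaging of the topological dynamics: one must verify carefully that the quasigroup translations map the closed set $\overline{C}\ar{n}$ \emph{onto} itself (using that $\overline{C}$, not merely $C$, is closed under composition) and are homeomorphisms, and then apply the right descriptive-set-theoretic fact (closed and dense-in-itself implies cardinality continuum). The quasigroup hypothesis is used exactly once, but essentially: it supplies enough homogeneity to upgrade one limit point into a perfect set.
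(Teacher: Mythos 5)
Your proof is correct, but it takes a genuinely different route from the paper's. The paper obtains Theorem~\ref{thm:1} as the implication \eqref{it:m1}$\Rightarrow$\eqref{it:m6} of Theorem~\ref{thm:nc}, whose engine is Lemma~\ref{lem:BE}: if $(\PIC)\ar{m}$ is countable, then $C\ar{m}$ has a finite base of equality. That lemma is proved by a hands-on diagonalization: enumerate $\overline{C}\ar{m}$ as $f_0, f_1, \ldots$ and, assuming no finite base of equality exists, build a pointwise-convergent sequence $g_k \in C\ar{m}$ whose limit lies in $\overline{C}\ar{m}$ yet differs from every $f_k$; the quasigroup cancellation laws (via $t(x) = r(x)\backslash(s(x)\cdot f_{k+1}(x))$) enter to show that if finite interpolation data pins a function down, then that finite set is already a base of equality. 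A finite base of equality then gives $C\ar{n} = (\Loc{k} C)\ar{n}$ by Lemma~\ref{lem:boe}, whence $C = \PIC$. You instead argue contrapositively and topologically: you realize $\overline{C}\ar{n}$ as a closed subset of Baire space, show that a witness of non-closure is a non-isolated point, and use the quasigroup translations $w \mapsto v \cdot w$ (with $v = u/f$) as self-homeomorphisms acting transitively on $\overline{C}\ar{n}$, so that the set is perfect and hence of cardinality $2^{\aleph_0}$. All steps check out: the translations really are bijections of $\overline{C}\ar{n}$ onto itself because $\overline{C}$ is a clone containing $\cdot, \backslash, /$, and the identities $v\backslash(v\cdot w) = w$, $v\cdot(v\backslash z) = z$, $(u/f)\cdot f = u$ hold pointwise. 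In essence both arguments are Cantor-style exploitations of quasigroup cancellation, but the packaging differs and each buys something: your version is shorter, more conceptual, and yields the sharper dichotomy that $|\PIC| = 2^{\aleph_0}$ whenever $C$ is not locally closed (not merely uncountability); the paper's combinatorial route produces the finite-base-of-equality machinery, which is exactly what powers the stronger results it also needs, namely the arity-wise locality bounds of Theorem~\ref{thm:nc}\eqref{it:m5}, the compactness property of Theorem~\ref{thm:tcomp}, and the constantive refinement in Theorem~\ref{thm:3}, none of which follow from the perfect-set argument alone.
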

This theorem does not hold for clones without quasigroup operations.
We say that $C$ is \emph{constantive} if it contains
all unary constant operations. 
\begin{thm} \label{thm:2}
   There exist a set $A$ with $|A| = \aleph_0$ and a constantive clone
   $C$ on $A$ such that $\left|\PIC\right| = \aleph_0$ and $C \neq \PIC$.
\end{thm}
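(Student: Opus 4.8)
The plan is to construct an explicit \emph{essentially unary} counterexample on $A = \N$. Set $s(n) = n+1$ and, for each $N \in \N$, let $t_N(n) = \min(n+1,\, N+1)$, the successor truncated at height $N+1$; each $t_N$ has finite range and agrees with $s$ on $\{0,1,\dots,N\}$. I would let $C$ be the clone generated by $\{t_N : N \in \N\}$ together with all constant operations, so that $C$ is constantive and, being generated by countably many operations on a countable set, is itself countable.

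First I would pin down $C$ exactly. Call an operation \emph{constant or essentially unary} if it is a constant or has the form $(x_1,\dots,x_n) \mapsto g(x_i)$ for a single index $i$ and a unary $g$. This class contains all generators and all projections and is closed under superposition (composing with a projection returns one argument; composing with a unary $g$ keeps the dependence on a single variable), hence equals $C$. In particular every member of $C$ depends on at most one variable, so $C$ has no quasigroup operations and $|C| = \aleph_0$. A short computation identifies the unary part $C^{[1]}$ as the constants, the identity, and the truncated shifts $n \mapsto \min(n+r,\, c)$ with $0 \le r \le c$. Since every operation in $C$ is a constant, a projection, or has finite range --- a property preserved by superposition but failed by $s$ --- we obtain $s \notin C$. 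On the other hand $s \in \overline{C}$: for any finite $B \subseteq A$ choose $N \ge \max B$ and note $t_N|_B = s|_B$. Thus $C \ne \overline{C} = \PIC$.

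The main work, and the main obstacle, is to show $\left|\PIC\right| = \aleph_0$, that is, to prevent the local closure from blowing up to the cardinality of the continuum. I would prove this in two steps. \emph{(a)} Every $f \in \overline{C}$ is a constant or essentially unary: if $f$ depended on two distinct coordinates, choose tuples $a,a'$ (respectively $b,b'$) differing only in the first (respectively second) coordinate and witnessing the dependence, and set $B = \{a,a',b,b'\}$. Any $g \in C$ agreeing with $f$ on $B$ cannot be constant (as $f$ varies on $B$), hence is essentially unary; but then it must depend on the first coordinate (because of $a,a'$) and on the second (because of $b,b'$), which is impossible. So no interpolant exists and $f \notin \overline{C}$. \emph{(b)} If $f \in \overline{C}$ is essentially unary with unary part $\phi$, then testing $f$ on finite sets of tuples that vary a single coordinate shows $\phi \in \overline{C^{[1]}}$.

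It then remains to compute $\overline{C^{[1]}}$ and see it is countable, which is the heart of the cardinality bound. Analysing local interpolation in the unary fragment, a function $\phi$ with $\phi(0) = r$ that is locally interpolated by $C^{[1]}$ must satisfy $\phi(i) = \min(i+r,\, c_N)$ on each initial segment $\{0,\dots,N\}$; such a branch either eventually stabilises, giving a truncated shift $n \mapsto \min(n+r,c)$ already lying in $C^{[1]}$, or never gets capped, forcing $\phi = s^r$. Hence $\overline{C^{[1]}} = \{\text{constants}\} \cup \{\, n \mapsto \min(n+r,c) : 0 \le r \le c \,\} \cup \{\, s^r : r \ge 0 \,\}$, a countable set. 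Combining this with \emph{(a)} and \emph{(b)}, $\overline{C}$ is a countable collection of constant and essentially unary operations, so $\left|\PIC\right| = \aleph_0$ while $s \in \PIC \setminus C$, completing the argument.
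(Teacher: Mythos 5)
Your proposal is correct, and it realizes the same underlying strategy as the paper's proof: build a constantive clone generated by constants and a countable family of unary functions, each a ``finitely modified'' approximation of a target function, so that the local closure picks up the target as a pointwise limit while staying countable. The concrete witnesses differ: the paper works on $\N_0$ with the functions $g_a$ that agree with the parity function $p$ below $a$ and with the identity from $a$ on, and its closure gains exactly one new unary function (the paper proves $\overline{C}^{[1]} = C^{[1]} \cup \{p\}$); your truncated successors $t_N$ instead make the closure gain the countably many powers $s^r$ of the successor. The more substantive difference is how essential unarity of the whole closure is established: the paper observes that $C$ preserves $\rho = \{(a,b,c,d) \in A^4 \setsuchthat a=b \text{ or } c=d\}$, hence so does $\overline{C} = \PIC$, and cites \cite[Lemma~1.3.1(a)]{PK:FUR}, whereas your step \emph{(a)} proves the same fact directly by a four-point interpolation argument, which makes your proof self-contained; your step \emph{(b)} (reducing to the unary closure via diagonal tuples) then plays the role of the paper's reduction to $\overline{C}^{[1]}$. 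Two minor slips, neither of which harms the conclusion: your parametrization of $C^{[1]}$ should exclude $r=0$ with a finite cap, since $n \mapsto \min(n,c)$ is not generated (every composite of $k \ge 1$ truncated successors shifts by $k$ before capping); and in computing $\overline{C^{[1]}}$ you should first rule out constant interpolants on long initial segments (immediate, because every non-constant member of $C^{[1]}$ is already non-constant on $\{0,1\}$) before asserting that the interpolants have the form $i \mapsto \min(i+r,c_N)$. Since the theorem only needs the upper bound $\left|\PIC\right| \le \aleph_0$ (the lower bound is free because $C$ contains all constants), these details are cosmetic.
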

For a clone $C$ on $A$, 
 $\Inv\ar{m} C$ denotes the set of $m$-ary invariant relations of $C$. 
It is well known that a function $f$ lies in $\Loc{m} C$ if and only if
it can be interpolated at every $m$-element subset of its domain by
a function in $C$; this is discussed, e.g.,  in \cite{Po:AGGT} and in \cite[Lemma~7]{Ei:EBUC} and stated in Lemma~\ref{lem:locint}.
We write $C\ar{n}$ for the set of $n$-ary functions in $C$.
Let $B$ be any set, and let $F \subseteq A^B$. A subset $D$ of $B$ is
    a \emph{base of equality} for $F$ if for all $f,g \in F$ with
    $f|_D = g|_D$, we have $f=g$.
    Theorem~\ref{thm:1} can be extended in the following way:
\begin{thm} \label{thm:nc}
    Let $A$ be a set with $|A| = \aleph_0$, and let $C$ be a 
    clone
   on $A$ with quasigroup operations.
   Then the following are equivalent:
   \begin{enumerate}
    \item \label{it:m1} $\left|\PIC\right| \le \aleph_0$.
    \item \label{it:m3} For each $n \in \N$, 
     $C\ar{n}$ has a finite base of equality.
     \item \label{it:m5} $|C| \le \aleph_0$ and $\forall n \in \N$ $\exists k \in \N$ : $C\ar{n} = (\Loc{k} C)\ar{n}$.
    \item  \label{it:m6} $|C| \le \aleph_0$ and $C = \PIC$.
   \end{enumerate}
\end{thm}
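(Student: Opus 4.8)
The plan is to prove the cycle of implications (\ref{it:m1})$\Rightarrow$(\ref{it:m3})$\Rightarrow$(\ref{it:m5})$\Rightarrow$(\ref{it:m6})$\Rightarrow$(\ref{it:m1}). The last implication is trivial, since $C = \PIC$ gives $|\PIC| = |C| \le \aleph_0$; note that (\ref{it:m1})$\Rightarrow$(\ref{it:m6}) is in any case already contained in Theorem~\ref{thm:1}. The two ``downward'' implications (\ref{it:m3})$\Rightarrow$(\ref{it:m5})$\Rightarrow$(\ref{it:m6}) are elementary and do not use the quasigroup hypothesis. The hard direction is (\ref{it:m1})$\Rightarrow$(\ref{it:m3}), which I would prove directly by a tree construction that in fact reproves and strengthens Theorem~\ref{thm:1}; this is where the quasigroup operations are essential and where I expect the main obstacle to lie.

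For (\ref{it:m5})$\Rightarrow$(\ref{it:m6}), I would use that $\Inv C = \bigcup_k \Inv\ar{k} C$, whence $\PIC = \bigcap_k \Loc{k} C$. Given (\ref{it:m5}), fix $n$ and the corresponding $k$; since $C \subseteq \PIC \subseteq \Loc{k} C$ in every arity, $C\ar{n} \subseteq (\PIC)\ar{n} \subseteq (\Loc{k} C)\ar{n} = C\ar{n}$, so equality holds throughout and $C = \PIC$, while $|C| \le \aleph_0$ is assumed. For (\ref{it:m3})$\Rightarrow$(\ref{it:m5}), let $D \subseteq A^n$ be a finite base of equality for $C\ar{n}$ and set $k := |D| + 1$. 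First, restriction to $D$ embeds $C\ar{n}$ into $A^{D}$, so $|C\ar{n}| \le |A|^{|D|} = \aleph_0$ and hence $|C| \le \aleph_0$. The inclusion $C\ar{n} \subseteq (\Loc{k} C)\ar{n}$ is clear. Conversely, take $f \in (\Loc{k} C)\ar{n}$ and an arbitrary point $p \in A^n$, and choose a $k$-element set $S$ with $D \cup \{p\} \subseteq S \subseteq A^n$. By Lemma~\ref{lem:locint} there is $g_p \in C\ar{n}$ with $f|_S = g_p|_S$, so $g_p|_D = f|_D$ and $g_p(p) = f(p)$. For two points $p,p'$ the functions $g_p, g_{p'} \in C\ar{n}$ agree on $D$, so $g_p = g_{p'} =: g$ by the base-of-equality property; then $g(p) = f(p)$ for every $p$, i.e.\ $f = g \in C\ar{n}$. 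Thus $(\Loc{k} C)\ar{n} = C\ar{n}$.

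The remaining implication (\ref{it:m1})$\Rightarrow$(\ref{it:m3}) I would establish in contrapositive form: if some $C\ar{n}$ has \emph{no} finite base of equality, then $|\PIC| \ge 2^{\aleph_0}$. Enumerate $A^n = \{p_1, p_2, \dots\}$. The idea is to build a full binary tree of functions $h_w \in C\ar{n}$ indexed by finite $\{0,1\}$-strings $w$, together with a chain of finite sets $\emptyset = E_0 \subseteq E_1 \subseteq \cdots$ exhausting $A^n$ and split points $q_w \in A^n$, so that for $|w| = i$ the children satisfy $h_{w0}|_{E_i} = h_{w1}|_{E_i} = h_w|_{E_i}$ while $h_{w0}(q_w) \neq h_{w1}(q_w)$, with $p_{i+1} \in E_{i+1}$ and all $q_w$ with $|w|=i$ thrown into $E_{i+1}$. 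Since the children agree with the parent on $E_i$, an induction shows every descendant of $w$ agrees with $h_w$ on $E_{|w|}$; hence for each $\alpha \in \{0,1\}^{\N}$ the functions $h_{\alpha|_i}$ stabilize on every $E_j$ and converge pointwise to a map $h_\alpha$ that is interpolated on each finite set by some $h_{\alpha|_j} \in C$, so $h_\alpha \in \overline{C} = \PIC$. If $\alpha,\beta$ first differ at coordinate $i+1$, with common prefix $w$, then $q_w \in E_{i+1}$ forces $h_\alpha(q_w) = h_{w0}(q_w) \neq h_{w1}(q_w) = h_\beta(q_w)$, so $\alpha \mapsto h_\alpha$ is injective and yields $2^{\aleph_0}$ members of $\PIC$.

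The crux is the inductive branching step, and here the quasigroup operations enter. Because $E_i$ is not a base of equality, there are $f \neq g$ in $C\ar{n}$ with $f|_{E_i} = g|_{E_i}$; their disagreement automatically lies outside $E_i$. These agree with each other but not with the parent $h_w$, so I would glue using the ternary term $s(x,y,z) := (x / y) \cdot z \in C\ar{3}$, which by the quasigroup law $(x/y)\cdot y = x$ satisfies $s(x,y,y) = x$. Setting $h_{w0} := h_w$ and $h_{w1} := s(h_w, f, g)$ gives two functions in $C\ar{n}$ that coincide wherever $f = g$ (in particular on $E_i$, where they both equal $h_w$), while at any point $q$ with $f(q) \neq g(q)$ left-cancellation in the quasigroup gives $h_{w1}(q) \neq h_{w0}(q)$; take $q_w := q$. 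I expect the main obstacle to be this step together with the bookkeeping that makes it global: choosing the fresh difference points and freezing conditions so that all $2^{\aleph_0}$ branch-limits are simultaneously well defined and pairwise distinct. The explicit term $s$ is the only place the quasigroup hypothesis is used, and it is exactly what is unavailable for the constantive clone of Theorem~\ref{thm:2}.
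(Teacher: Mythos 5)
Your proposal is correct, and its outer structure (the cycle \eqref{it:m1}$\Rightarrow$\eqref{it:m3}$\Rightarrow$\eqref{it:m5}$\Rightarrow$\eqref{it:m6}$\Rightarrow$\eqref{it:m1}) coincides with the paper's; your \eqref{it:m3}$\Rightarrow$\eqref{it:m5} is just a positive rephrasing of the paper's Lemma~\ref{lem:boe} (all interpolants $g_p$ agree on $D$, hence coincide), and \eqref{it:m5}$\Rightarrow$\eqref{it:m6}, \eqref{it:m6}$\Rightarrow$\eqref{it:m1} are identical to the paper's. The genuine divergence is in the key implication \eqref{it:m1}$\Rightarrow$\eqref{it:m3}. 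The paper (Lemma~\ref{lem:BE}) argues by contradiction and diagonalization: assuming $(\PIC)\ar{n}$ is countably infinite, it enumerates it as $f_0, f_1, \ldots$ and builds a \emph{single} chain $(g_k)$ in $C\ar{n}$, frozen on an increasing sequence of finite sets, whose pointwise limit lies in $(\PIC)\ar{n}$ yet is forced to differ from every $f_k$; the quasigroup operations enter only to rule out the degenerate case where the chain cannot be continued, via the term $t(x) = r(x)\backslash\bigl(s(x)\cdot f_{k+1}(x)\bigr)$, which shows that in that case the current finite set would already be a base of equality. You instead prove the contrapositive by a perfect-tree (Cantor-scheme) construction, using the quasigroup in a different place: the forking term $s(x,y,z) = (x/y)\cdot z$, with $s(x,y,y)=x$ and left cancellation, produces at each level two members of $C\ar{n}$ agreeing with the parent on the frozen set but disagreeing at a fresh point, and the $2^{\aleph_0}$ branch limits are pairwise distinct elements of $\PIC$. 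Your checking of the branching step, the stabilization of branches, and the injectivity via the points $q_w \in E_{i+1}$ is sound, so this is a complete argument. What each approach buys: your version yields a strictly stronger dichotomy --- if some $C\ar{n}$ has no finite base of equality then $|\PIC| \ge 2^{\aleph_0}$, so Theorem~\ref{thm:1} becomes ``locally closed or local closure of size continuum'' with no cardinal-arithmetic caveats --- at the cost of the tree bookkeeping; the paper's single-chain diagonalization is more economical, needs only one pair of enumerations, and is packaged as Lemma~\ref{lem:BE} so that it can be reused verbatim in Theorem~\ref{thm:tcomp} and Theorem~\ref{thm:3} (your tree argument could be packaged the same way). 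One cosmetic remark: your phrase that $f,g$ ``agree with each other but not with the parent $h_w$'' should read that they need not agree with $h_w$ --- which is exactly why the gluing term $s$ is required --- but this does not affect the argument.
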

A weaker version of this result was proved in \cite{Ai:LPFO}. As an application,
we obtain, e.g.,  that a countably infinite integral domain $R$ cannot be affine complete:
If it is affine complete, then the clone $C$ of polynomial functions of $R$ satisfies~\eqref{it:m5}, 
and therefore the unary polynomials
have a finite base of equality $D$. But $f(x) = 0$ and $g(x) = \prod_{d \in D} (x - d)$
show that this is not possible. In fact, Theorem~\ref{thm:nc} extracts a common
idea of several ``non-affine completeness'' results \cite{Ha:CoNa, KK:AfCG}. The proofs
are given in Section~\ref{sec:proofs}.

\section{Finite bases of equality}
Theorems~\ref{thm:1} and~\ref{thm:nc} rely on the following observation.
In a less general context, this observation appears in
\cite[Theorem~2]{Ai:LPFO}, and large parts of its 
proof are verbatim copies from \cite{Ai:LPFO}
and \cite[pp.51-52]{Ai:TSOC}.
\begin{lem} \label{lem:BE} 
    Let $A$ be a set with $|A| = \aleph_0$, let $m \in \N$, and let $C$ be a clone on $A$
    with quasigroup operations. If $|(\PIC)^{[m]}| \le \aleph_0$,
    then $C^{[m]}$ has a finite base of equality.
\end{lem}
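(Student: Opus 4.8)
The plan is to prove the contrapositive: assuming that $C^{[m]}$ has \emph{no} finite base of equality, I will exhibit $2^{\aleph_0}$ distinct functions inside $(\PIC)^{[m]}$, contradicting $|(\PIC)^{[m]}| \le \aleph_0$. These functions will arise as pointwise limits along the branches of a binary splitting tree of functions from $C^{[m]}$. Membership of the limits in $\PIC = \overline{C}$ will follow from the description of the local closure recalled at the start of Section~\ref{sec:res} (interpolation at each finite subset of the domain), and distinctness of limits along different branches will be engineered directly into the tree.

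The key step, and the only place where the quasigroup operations are used, is a \emph{transport} construction that converts a splitting at a finite set into a splitting off of a prescribed function. Suppose $E \subseteq A^m$ is finite and is not a base of equality, so there are $u, v \in C^{[m]}$ with $u|_E = v|_E$ and $u(p) \neq v(p)$ for some $p$; note $p \notin E$. Given any $f \in C^{[m]}$, set $g := (f / u) \cdot v$, that is, $g(\mathbf{x}) = (f(\mathbf{x}) / u(\mathbf{x})) \cdot v(\mathbf{x})$; since $\cdot, / \in C$, we have $g \in C^{[m]}$. On $E$ we have $u = v$, so the quasigroup identity $(x / y) \cdot y = x$ gives $g|_E = f|_E$. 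At $p$, the identity $f(p) = (f(p)/u(p)) \cdot u(p)$ together with left cancellation of $\cdot$ (valid in any quasigroup, since $b = a \backslash (a \cdot b)$) forces $g(p) \neq f(p)$, because $u(p) \neq v(p)$. Thus $g$ agrees with $f$ on $E$ yet differs from $f$ at the fresh point $p$.

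With this tool I build, by recursion on finite $\{0,1\}$-sequences $\sigma$, functions $f_\sigma \in C^{[m]}$ together with an increasing chain of finite sets $E_0 \subseteq E_1 \subseteq \cdots$ whose union is all of $A^m$; this is arranged by enumerating $A^m = \{\mathbf{a}_0, \mathbf{a}_1, \dots\}$ (legitimate as $A$ is countable) and placing $\mathbf{a}_n$ into $E_{n+1}$. At a sequence $\sigma$ of length $n$ I apply the transport construction to $f_\sigma$ and $E_n$, obtaining a splitting pair $u_\sigma, v_\sigma$ with witness $p_\sigma$, and I set $f_{\sigma 0} := f_\sigma$ and $f_{\sigma 1} := (f_\sigma / u_\sigma) \cdot v_\sigma$; these agree with $f_\sigma$ on $E_n$ but differ at $p_\sigma$, and I enlarge $E_{n+1}$ to contain every such $p_\sigma$. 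Along any branch $\alpha \in \{0,1\}^{\N}$ the functions $f_{\alpha|_n}$ stabilize at each point of $A^m$, so their pointwise limit $f_\alpha$ is well defined; since $f_\alpha$ agrees on each $E_n$ with $f_{\alpha|_n} \in C$, every finite subset of $A^m$ is interpolated by a member of $C$, whence $f_\alpha \in \overline{C}^{[m]} = (\PIC)^{[m]}$. Finally, if $\alpha \neq \beta$ branch apart at some $\sigma$, then $f_\alpha$ and $f_\beta$ retain the opposite values $f_{\sigma 0}(p_\sigma) \neq f_{\sigma 1}(p_\sigma)$ at the frozen point $p_\sigma \in E_{|\sigma|+1}$, so $\alpha \mapsto f_\alpha$ is injective and the desired contradiction follows.

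I expect the transport construction to be the substantive point: without quasigroup operations there is no reason a splitting pair at $E$ can be relocated to one agreeing with an arbitrary $f$ on $E$, and indeed Theorem~\ref{thm:2} shows the conclusion fails in general. The remaining work---the bookkeeping for the tree, the verification of stabilization along branches, and the check that each finite set is eventually absorbed into some $E_n$---is routine, and the coherence guaranteeing that the limits land in the local closure follows directly from the defining property of $\overline{C} = \PIC$.
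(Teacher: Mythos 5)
Your proof is correct, but it takes a genuinely different route from the paper. The algebraic core is the same in both: from a pair $u,v \in C^{[m]}$ with $u|_E = v|_E$ but $u(p) \neq v(p)$, quasigroup term operations transport the splitting so as to agree with a prescribed $f$ on $E$ while differing from $f$ at $p$ (your term is $(f/u)\cdot v$; the paper uses the mirror-image term $r \backslash (s \cdot f)$ inside a proof by contradiction that a certain finite set would otherwise be a base of equality). The architecture built on top of this core differs, however. The paper fixes an enumeration $f_0, f_1, \ldots$ of the countable set $\overline{C}^{[m]}$ and runs a diagonalization: it constructs a single coherent sequence $g_k \in C^{[m]}$ whose pointwise limit $l$ lies in $\overline{C}^{[m]}$ yet disagrees with each $f_k$ on a frozen finite set, contradicting the completeness of the enumeration (with a separate easy case when $\overline{C}^{[m]}$ is finite). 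You instead build a perfect binary tree of functions in $C^{[m]}$ and take limits along all $2^{\aleph_0}$ branches, with disagreements frozen at the branching points; this needs no enumeration of $\overline{C}^{[m]}$, handles the finite and countably infinite cases uniformly, and proves the stronger dichotomy that $C^{[m]}$ either has a finite base of equality or $|(\PIC)^{[m]}| \ge 2^{\aleph_0}$ -- which is exactly the "locally closed or uncountable closure" phenomenon announced in Section~\ref{sec:res}, in a sharper cardinality form. The price is slightly heavier bookkeeping (indexing by binary strings, absorbing all witnesses $p_\sigma$ of one level into the next finite set), but all the steps you label routine -- stabilization along branches, membership of the limits in $\overline{C}$ via finite interpolation, and injectivity of $\alpha \mapsto f_\alpha$ -- do go through as you describe.
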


\begin{proof}%
       Let $\overline{C} := \PIC$. In the case that  $\overline{C}^{[m]}$ is finite,
       its subset $C^{[m]}$ is also finite. Then
       for every $f, g \in C^{[m]}$ with $f \neq g$, we choose
       $a_{(f,g)} \in A^m$ such that $f (a_{(f,g)}) \neq g (a_{(f,g)})$.
       Then $D := \{ a_{(f,g)} \setsuchthat f,g \in C^{[m]}, f \neq g \}$ is a base of
       equality for $C^{[m]}$.
       Hence we will from now on assume $|\overline{C}^{[m]}| = \aleph_0$.
       Let $a_0, a_1, a_2, \ldots$ 
       and $f_0, f_1, f_2, \ldots$ 
       be complete enumerations of $A^m$ and $\overline{C}^{[m]}$, respectively.
       Furthermore we abbreviate 
       the set $\{ a_i \,|\, i \le r \}$ by $A (r)$.
       Seeking a contradiction, we suppose that there is no finite 
       base of equality for $C^{[m]}$. 
We shall construct a sequence $(n_k)_{k \in \N_0}$ of 
non-negative integers 
and a sequence $(g_k)_{k \in \N_0}$ of elements of $C^{[m]}$ with the following 
properties: 
\begin{enumerate} 
   \item $ \forall k \in \N_0 :
           g_k|_{A (n_k)}
           \not= 
           f_k|_{A (n_k)}$, 
   \item $ \forall k \in \N_0  :  n_{k + 1} > n_{k}$ 
   \item $ \forall k \in \N_0  :  
           g_{k+1} |_{ A ({n_k}) } = 
           g_{k  } |_{ A ({n_k}) } $. 
\end{enumerate} 
We construct the sequences inductively. We choose $g_0 \in C^{[m]}$ such 
that $g_0 \not= f_0$, and $n_0 \in \N_0$ minimal with 
$g_0 (a_{n_0}) \not= f_0 (a_{n_0})$.
If we have already constructed $g_k$ and $n_k$ we construct 
$g_{k+1}$ and $n_{k+1}$ as follows: 
in the case that $g_k|_{ A (n_k) }  \not= 
              f_{k+1}|_{ A (n_k) } $, 
we set $g_{k+1} := g_k$ and 
$n_{k+1} := n_k + 1$. 
In the case $g_k|_{ A (n_k) }  =  
              f_{k+1}|_{ A (n_k) } $, we first show
that  
there exists a function $h \in C^{[m]}$ with 
\begin{equation} \label{eq:hf} 
      g_k |_{A (n_k) } = h|_{A (n_k) }  \text{ and } h \not= f_{k+1}.
\end{equation}
  Suppose that on the contrary every $h \in C^{[m]}$ with 
 $g_k |_{A (n_k) } = 
    h|_{A (n_k) }$ satisfies $h = f_{k+1}$.
 In this case, $g_k = f_{k+1}$, and therefore $f_{k+1} \in C^{[m]}$.
   We will show next that $A(n_k)$ is a base of equality of $C^{[m]}$.
 To this end, let $r,s \in C^{[m]}$ with $r|_{A (n_k)} = s|_{A (n_k)}$. 
 We define $t(x) := r(x) \backslash (s(x) \cdot f_{k+1} (x))$.
 Then for every $x \in  A (n_k)$, we have $t(x) = r(x) \backslash (r(x) \cdot
 f_{k+1} (x)) = f_{k+1} (x) = g_k (x)$. Hence $t = f_{k+1}$.
 Therefore, for every $x \in A^m$, we have
 $r(x) \backslash (s (x) \cdot f_{k+1} (x)) = f_{k+1} (x)$, thus
 $s(x) \cdot f_{k+1} (x) = r(x) \cdot f_{k+1} (x)$, and therefore
 $(s (x) \cdot f_{k+1} (x)) / f_{k+1} (x) = (r (x) \cdot f_{k+1} (x)) / f_{k+1} (x))$,
 which implies $s(x) = r(x)$. Thus $r=s$, which completes the proof
 that $A (n_k)$ is a base of equality of $C^{[m]}$, contradicting the
 assumption that no such base exists. Hence there is $h \in C\ar{m}$ that
 satisfies~\eqref{eq:hf}.
 Continuing in the
 construction of $g_{k+1}$, we set $g_{k+1} := h$, and we choose $n_{k+1}$ to be minimal with 
    $h (a_{n_{k+1}}) \not= f_{k+1} (a_{n_{k+1}})$. 
  
Since for every $a \in A^m$, the sequence  
$(g_k (a))_{k \in \N_0}$ 
is eventually constant, we may define a function 
$l :A^m \to A$ by 
\(
    l(a) := \lim_{k \rightarrow \infty} g_k (a).
\)
We will now show that $l \in \overline{C}^{[m]}$. The clone $\overline{C}$ contains exactly
those functions that can be interpolated at every finite subset of their
domain with a function in $C$. Hence we show that $l$ can be interpolated
at every finite subset $B$ of $A^m$ by a function in $C$.
Since $\bigcup_{i \in \N_0} A_i = A^m$, there is $k \in \N$  such that
$B \subseteq A (n_k)$. Since $l|_{A (n_k)} = g_k|_{A (n_k)}$, the function $g_k \in C^{[m]}$
interpolates $l$ at $B$.
We conclude that that the
function  $l$ lies in  $\overline{C}^{[m]}$. Thus $l$ is
equal to $f_k$ for 
some $k \in \N_0$. Since $ l|_{ A (n_k) } = 
                          g_k|_{ A (n_k)  }$   
and $g_k|_{ A (n_k) } \not= 
     f_k|_{ A (n_k) }$, 
     we obtain  $l|_{A (n_k)} \not= f_k|_{A (n_k)}$, a contradiction.
Hence $C^{[m]}$ has a finite base of equality. 
\end{proof}

\begin{lem}[cf. {\cite[Lemma~1]{HN:LPFO} and \cite[Proposition~2]{Ai:LPFO}}]  \label{lem:boe}
   Let $A$ be a set, let $C$ be a clone on $A$, let $n \in \N$,  let $D$ be a finite base
   of equality for $C\ar{n}$, and let $k := |D| + 1$.
    Then $C\ar{n} = (\Loc{k} C)\ar{n}$.
\end{lem}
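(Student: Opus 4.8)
The plan is to prove the two inclusions separately. The inclusion $C\ar{n} \subseteq (\Loc{k} C)\ar{n}$ is immediate: a clone is always contained in each of its local closures $\Loc{k} C$, since every function interpolates itself on any subset of its domain, and passing to $n$-ary parts preserves this containment.

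For the reverse inclusion, I would fix an arbitrary $f \in (\Loc{k} C)\ar{n}$ and aim to show $f \in C\ar{n}$. By the interpolation characterization recorded in Lemma~\ref{lem:locint}, $f$ can be interpolated on every subset of $A^n$ of size at most $k = |D|+1$ by a function in $C$. The whole point is to apply this with the fixed base of equality $D$ in hand. First interpolate $f$ on $D$ (extending $D$ to a $k$-element set if one wishes to match the exact size $k$) by some $g \in C\ar{n}$, so that $g|_D = f|_D$. Then, for each point $a \in A^n$, the set $D \cup \{a\}$ has at most $|D|+1 = k$ elements, so there is $g_a \in C\ar{n}$ with $g_a|_{D \cup \{a\}} = f|_{D \cup \{a\}}$.

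Now I would observe that $g_a$ must equal $g$ for every $a$. Indeed, $g_a|_D = f|_D = g|_D$, and since $g_a, g \in C\ar{n}$ while $D$ is a base of equality for $C\ar{n}$, this forces $g_a = g$. Consequently $g(a) = g_a(a) = f(a)$ for each point $a$, and as $a \in A^n$ was arbitrary we conclude $g = f$ as functions on $A^n$. Hence $f = g \in C\ar{n}$, which completes the nontrivial inclusion.

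There is no serious obstacle here; the argument is a direct exploitation of the defining property of a base of equality, namely that a function in $C\ar{n}$ is already determined by its restriction to $D$. The only point requiring a little care is the size bookkeeping: adjoining a single point to $D$ must keep the resulting set within the admissible size $k$, which is precisely why $k$ is chosen to be $|D|+1$, and one should invoke the interpolation characterization for subsets of size \emph{at most} $k$ (which follows from Lemma~\ref{lem:locint} by extending smaller sets).
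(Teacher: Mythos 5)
Your proof is correct and takes essentially the same route as the paper's: interpolate $f$ on $D$ by some $g \in C\ar{n}$, then interpolate on $D \cup \{a\}$ and use the base-of-equality property to force the second interpolant to coincide with $g$, hence with $f$ at $a$. The paper phrases this as a contradiction at a single witness point $y$ where $f(y) \neq g(y)$, while you argue directly at every point $a \in A^n$; the difference is purely cosmetic.
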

\begin{proof}
 Let $l \in (\Loc{k} C)\ar{n}$. Then $l$ can be interpolated at
 every subset of $A^n$  with at most $k$ elements by a function in $C \ar{n}$.
 Hence there is $f \in C^{[n]}$ such that $f|_{D} = l|_{D}$. If $f = l$,
then $l \in C^{[n]}$. In the case $f \neq l$, we take $y \in A^n$
such that $f(y) \neq l(y)$. Now we choose $g \in C^{[n]}$ such
that $g|_{D \cup \{y\}} = l|_{D \cup \{y\}}$. Then $f(y) \neq g(y)$
and $f|_{D} = g|_{D}$,  contradicting the assumption that $D$ is a base of equality for
  $C\ar{n}$. 
\end{proof}

\section{A compactness property for local interpolation}

For two sets $A$ and $B$, a set of functions $F \subseteq A^B$, and $k \in \N$,
the set $\Lo{k} F$ is defined as the set of those functions that can be interpolated
at every subset of $B$ with at most $k$ elements by a function in $F$ \cite{Po:AGGT}.
If $C$ is a clone, and $F = C^{[m]}$ is its $m$-ary part, then $\Lo{k} (C^{[m]})$
is the set of  $m$-ary functions on $A$  that preserve the $k$-ary relations in $\Inv \, C$.
\begin{lem} (cf. \cite[p.\ 31, Theorem~4.1]{Po:AGGT}) \label{lem:locint}
  Let $A$ be a set, let $C$ be a clone on $A$, and let $k, m \in \N$.
  Then
  $\Lo{k} (C\ar{m}) = (\Loc{k} C)\ar{m} = (\Loc{k} (C\ar{m}))\ar{m}$.
\end{lem}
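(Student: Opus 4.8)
The plan is to show that the three sets coincide by running around the cyclic chain of inclusions
\[
   \Lo{k}(C\ar{m}) \subseteq (\Loc{k}(C\ar{m}))\ar{m} \subseteq (\Loc{k} C)\ar{m} \subseteq \Lo{k}(C\ar{m}).
\]
Throughout I would use the standard column reformulation of preservation: an $m$-ary operation $g$ preserves a $k$-ary relation $\rho \subseteq A^k$ if and only if, for all $c_1,\dots,c_m \in \rho$, the $k$-tuple obtained by applying $g$ to the $k$ rows of the $k \times m$ matrix whose columns are $c_1,\dots,c_m$ again lies in $\rho$. The bridge to interpolation is the observation that these $k$ rows are at most $k$ points of $A^m$.

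For the first inclusion I would take $g \in \Lo{k}(C\ar{m})$ and an arbitrary $\rho \in \Inv\ar{k}(C\ar{m})$, choose $c_1,\dots,c_m \in \rho$, let $a_1,\dots,a_k \in A^m$ be the rows of the associated matrix, and interpolate $g$ on $\{a_1,\dots,a_k\}$ by some $f \in C\ar{m}$. Because $f$ preserves $\rho$ and agrees with $g$ on each $a_i$, the tuple produced by $g$ equals the one produced by $f$ and so lies in $\rho$; hence $g \in (\Loc{k}(C\ar{m}))\ar{m}$. The second inclusion is purely formal: from $C\ar{m} \subseteq C$ one gets $\Inv\ar{k} C \subseteq \Inv\ar{k}(C\ar{m})$, so preserving the larger family of relations forces preserving the smaller one, and therefore $(\Loc{k}(C\ar{m}))\ar{m} \subseteq (\Loc{k} C)\ar{m}$.

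The closing inclusion is the heart of the argument. Given $g \in (\Loc{k} C)\ar{m}$ and a subset $\{a_1,\dots,a_k\} \subseteq A^m$ of size at most $k$, I would form the graph relation
\[
   \rho := \{\, (f(a_1),\dots,f(a_k)) \setsuchthat f \in C\ar{m} \,\} \subseteq A^k,
\]
and show $\rho \in \Inv\ar{k} C$. This is where the clone axioms do the work: for any $h \in C$ of some arity $n$ and any $f_1,\dots,f_n \in C\ar{m}$, the composite $f' := h(f_1,\dots,f_n)$ is again an $m$-ary member of $C$, and applying $h$ to the rows of the corresponding elements of $\rho$ reproduces exactly the tuple $(f'(a_1),\dots,f'(a_k)) \in \rho$. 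Hence $\rho$ is preserved by every operation of $C$, not merely by the $m$-ary ones. Since the $m$-ary projections lie in $C\ar{m}$, taking $f$ to be the $j$-th projection in the definition of $\rho$ shows that the coordinate columns $(a_{1,j},\dots,a_{k,j})$ belong to $\rho$ for $j = 1,\dots,m$; applying $g$ to the rows of these $m$ columns yields $(g(a_1),\dots,g(a_k))$, which lies in $\rho$ because $g$ preserves every relation in $\Inv\ar{k} C$. By the definition of $\rho$ there is then some $f \in C\ar{m}$ that interpolates $g$ on $\{a_1,\dots,a_k\}$, so $g \in \Lo{k}(C\ar{m})$.

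The main obstacle I anticipate is precisely the verification that $\rho$ is invariant under all of $C$ rather than only under $C\ar{m}$, since this is the step that upgrades preservation of the $k$-ary relations of $C\ar{m}$ to preservation of those of $C$, thereby linking $(\Loc{k} C)\ar{m}$ to the interpolation set $\Lo{k}(C\ar{m})$. Everything else is careful bookkeeping of the row/column correspondence together with the single observation that the $k$ rows of a $k \times m$ matrix constitute at most $k$ points of $A^m$, so that membership in $\Lo{k}$ --- interpolability on every set of at most $k$ points --- is exactly what the preservation conditions deliver.
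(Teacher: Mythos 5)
Your proof is correct and complete. There is in fact nothing in the paper to compare it against: the lemma is stated there without proof, only with a pointer to P\"oschel's report (cf.\ \cite[p.~31, Theorem~4.1]{Po:AGGT}), so your argument supplies exactly the standard reasoning behind that citation. The row/column dictionary is handled properly in both directions: interpolating $g$ on the (at most $k$) rows of the $k \times m$ matrix formed by tuples of $\rho$ gives $\Lo{k}(C\ar{m}) \subseteq (\Loc{k}(C\ar{m}))\ar{m}$, and the middle inclusion is indeed just the antitonicity $\Inv\ar{k} C \subseteq \Inv\ar{k}(C\ar{m})$ coming from $C\ar{m} \subseteq C$. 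The decisive step, as you correctly identify, is the closing inclusion, and your relation $\rho = \{\, (f(a_1),\ldots,f(a_k)) \setsuchthat f \in C\ar{m} \,\}$ is the classical witness: closure of the clone under composition makes $\rho$ invariant under all of $C$ and not merely under $C\ar{m}$, membership of the $m$-ary projections in $C\ar{m}$ places the coordinate columns of $a_1,\ldots,a_k$ into $\rho$, and preservation of $\rho$ by $g$ then yields the interpolating function. Two pedantic points would deserve a sentence in a polished write-up: for a subset of $A^m$ with fewer than $k$ elements, list its points as $a_1,\ldots,a_k$ with repetitions so that $\rho$ is genuinely $k$-ary (the argument is unchanged), and observe that $\rho \neq \emptyset$ --- again because projections lie in $C\ar{m}$ --- in case one's convention excludes empty invariant relations.
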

For countable sets $A$, we obtain the following result.
\begin{thm} \label{thm:tcomp}
  Let $A$ be a set with $|A| \le \aleph_0$, and let $C$ be a clone on $A$ with quasigroup operations
  such that $|C\ar{m}| \le \aleph_0$.
  If $\bigcap_{k \in \N} \Lo{k} (C\ar{m}) = C\ar{m}$, then there exists $n \in \N$
  such that $\Lo{n} (C\ar{m}) = C\ar{m}$.
\end{thm}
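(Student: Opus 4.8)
The plan is to recognize that the infinite intersection appearing in the hypothesis is nothing but the $m$-ary part of the local closure, which converts the ``compactness'' statement into a statement about the cardinality of $\overline{C}^{[m]}$, and then to feed this into Lemma~\ref{lem:BE} and Lemma~\ref{lem:boe}.

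First I would observe that a function $l : A^m \to A$ belongs to $\bigcap_{k \in \N} \Lo{k} (C^{[m]})$ if and only if it can be interpolated at \emph{every} finite subset of $A^m$ by a member of $C^{[m]}$: indeed, every finite subset of $A^m$ has at most $k$ elements for some $k \in \N$, so membership in all the $\Lo{k} (C^{[m]})$ is equivalent to interpolability at each finite subset. By the description of the local closure $\overline{C} = \PIC$ recalled in Section~\ref{sec:res}, and since an operation interpolating an $m$-ary function on a subset of $A^m$ must itself be $m$-ary, this intersection is exactly $\overline{C}^{[m]} = (\PIC)^{[m]}$. Hence the hypothesis $\bigcap_{k \in \N} \Lo{k} (C^{[m]}) = C^{[m]}$ is equivalent to $(\PIC)^{[m]} = C^{[m]}$.

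In particular $|(\PIC)^{[m]}| = |C^{[m]}| \le \aleph_0$, so the hypotheses of Lemma~\ref{lem:BE} are satisfied (this is where the quasigroup operations of $C$ enter), and $C^{[m]}$ has a finite base of equality $D$. I would then set $n := |D| + 1$ and apply Lemma~\ref{lem:boe} with arity $m$, which yields $C^{[m]} = (\Loc{n} C)^{[m]}$. Finally, Lemma~\ref{lem:locint} identifies $(\Loc{n} C)^{[m]}$ with $\Lo{n} (C^{[m]})$, so that $\Lo{n} (C^{[m]}) = C^{[m]}$, which is the desired conclusion.

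Since the three auxiliary results are already in hand, the only genuinely new step is the identification of the infinite intersection with $\overline{C}^{[m]}$; this is the move that turns the compactness formulation into the cardinality condition needed by Lemma~\ref{lem:BE}, after which the finite base of equality supplies a single localness level $n$. The substantive difficulty therefore resides entirely in Lemma~\ref{lem:BE}, and I expect no further obstacle here beyond verifying carefully that the interpolating functions can be chosen $m$-ary, so that they lie in $C^{[m]}$ and not merely in $C$.
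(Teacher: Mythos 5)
Your proposal is correct and follows essentially the same route as the paper's proof: identify $\bigcap_{k \in \N} \Lo{k} (C\ar{m})$ with $(\PIC)\ar{m}$, apply Lemma~\ref{lem:BE} to obtain a finite base of equality $D$, then conclude via Lemma~\ref{lem:boe} and Lemma~\ref{lem:locint} with $n = |D|+1$. The only cosmetic difference is that the paper performs the initial identification by citing Lemma~\ref{lem:locint} and intersecting $(\Loc{k} C)\ar{m}$ over $k$, whereas you unpack the interpolation definitions directly; the mathematical content is the same.
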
  
\emph{Proof:}
By Lemma~\ref{lem:locint} and the assumptions, $C\ar{m} =
                            \bigcap_{k \in \N} \Lo{k} (C\ar{m}) =
                            \bigcap_{k \in \N} (\Loc{k} C)\ar{m} =  (\PIC)\ar{m}$. Now
                            Lemma~\ref{lem:BE} yields a finite base of equality for $C\ar{m}$,
                            and now by Lemma~\ref{lem:boe}, there is $n \in \N$ such that
                            $C\ar{m} = (\Loc{n} C)\ar{m} = \Lo{n} (C\ar{m})$. \qed

For an arbitrary $m$-ary operation $f$ on the set $A$, we say that the property $I(f,n,C)$ holds
if $f$ can be interpolated by a function in $C$ at each subset of $A^m$ with at most $n$
elements. Theorem~\ref{thm:tcomp} yields the following compactness property: if $C$ is a countable clone with quasigroup operations,
if $A$ is countable, and if
$\forall f \in A^{A^m} : ((\forall k \in \N: I(f,k,C)) \Rightarrow f \in C)$ holds, then there is a natural number $n \in \N$
such that
$\forall f \in A^{A^m} : (I(f,n,C) \Rightarrow f \in C)$ holds.

\section{Proofs of the Theorems from Section~\ref{sec:res}} \label{sec:proofs}    

\begin{proof}[Proof of Theorem~\ref{thm:nc}:]

    \eqref{it:m1}$\Rightarrow$\eqref{it:m3}: Let $n \in \N$. Since $(\PIC)\ar{n} \subseteq \PIC$, we have
       $|(\PIC)\ar{n}| \le \aleph_0$.  Lemma~\ref{lem:BE} now yields a finite base of equality
      for $C\ar{n}$.

      \eqref{it:m3}$\Rightarrow$\eqref{it:m5}: Let $n \in \N$, and 
      let $D \subseteq A^n$ be a finite base of equality for $C\ar{n}$. We set $k := |D|+1$
      and  obtain $C\ar{n} = (\Loc{k} C)\ar{n}$ from Lemma~\ref{lem:boe}.
      The mapping $\varphi : C\ar{n} \to A^D$, $f \mapsto f|_D$ is injective, therefore
      $|C\ar{n}| \le \aleph_0$. 
       Since for every $n \in \N$, we have $|C\ar{n}| \le \aleph_0$, we have $|C| \le \aleph_0$.

     \eqref{it:m5}$\Rightarrow$\eqref{it:m6}: Let $n \in \N$,
       and let $k$ be taken from~\eqref{it:m5}.
     Then 
         $(\PIC)\ar{n} \subseteq (\Loc{k} C) \ar{n} \subseteq C\ar{n}$. 

     \eqref{it:m6}$\Rightarrow$\eqref{it:m1}: Obvious.
\end{proof}

\begin{proof}[Proof of Theorem~\ref{thm:1}:]
 Theorem~\ref{thm:1} is the implication~\eqref{it:m1}$\Rightarrow$\eqref{it:m6}
 of Theorem~\ref{thm:nc}.
\end{proof}

\begin{proof}[Proof of Theorem~\ref{thm:2}:]
   Let $A := \N_0$, and let $p (x) := x \, \text{ mod } 2$ for all $x \in \N_0$.
   For $a \in \N_0$, we define $g_a : \N_0 \to \N_0$ by
   \[
       g_a (x) := \left\{ \begin{array}{l}
                            p(x) \text{ if } x < a, \\ 
                            x \text{ if } x \ge a,
                          \end{array}
                 \right.
   \]
   and we let $c_a (x) := a$ for all $x \in \N_0$. 
   Let $M := \{ g_a \setsuchthat a \in \N_0 \} \cup \{ c_a (x) \setsuchthat a \in \N_0 \}$.
   We will first show that 
   $\algop{M}{\circ, g_0}$ is a submonoid of $\algop{{\N_0}^{\N_0}}{\circ, \mathrm{id}_{\N_0}}$.
   To this end, it is suffcient to show that $g_a \circ g_b \in M$ for all $a, b \in \N_0$.
   Since $g_0 = g_1 = g_2 = \mathrm{id}_{\N_0}$, we may assume $a \ge 3$ and $b \ge 3$.
   We will show
   \begin{equation} \label{eq:gab}
            g_a (g_b (x)) := g_{\max (a,b)} (x) \text{ for all } x \in \N_0.
   \end{equation}
   In the case $x < b$, we have $g_a (g_b (x)) = g_a (p(x)) = p(p(x)) = p(x) = g_{\max(a,b)} (x)$.
   In the case that $x \ge b$ and $x < a$, we have $g_a (g_b (x)) = g_a (x)$, and since in this
   case $b \le a$, $g_a(x) = g_{\max (a,b)} (x)$.
   In the case that $x \ge a$ and $x \ge b$, we have $g_a (g_b(x)) = g_a (x) = x = g_{\max (a,b)} (x)$.
   From~\eqref{eq:gab}, we deduce that $M$ is closed under composition.
   Now let $C$ be the clone on $\N_0$ that is generated by $M$; this clone consists
   of all functions $(x_1,\ldots, x_n) \mapsto m (x_j)$ with $n, j \in \N$, $m \in M$ and
   $j \le n$.
   Let $\overline{C} := \PIC$. Next, we show 
   \begin{equation} \label{eq:p}
          p \in \overline{C}.
   \end{equation}
   To prove~\eqref{eq:p}, we show that $p$ can be interpolated at every finite subset
   $B$ of $\N_0$ by a function in $C$. Let $a := \max (B)$. Then
   $g_{a+1} |_B = p|_B$. This completes the proof of~\eqref{eq:p}.
   Now we show
   \begin{equation} \label{eq:cp}
        \overline{C}^{[1]} = C^{[1]} \cup \{ p \}.
   \end{equation} 
   We only have to establish $\subseteq$.
   It is helpful to write down the list of values of some of the functions in $M \cup \{p\}$.
   \[
       \begin{array}{rl}
             c_3 & 333333\ldots \\
             c_2 & 222222\ldots \\
             c_1 & 111111\ldots \\
             c_0 & 000000\ldots \\
             p   & 010101\ldots \\
             \mathrm{id} & 012345\ldots \\
             g_3 & 010345\ldots \\
             g_4 & 010145\ldots \\
             g_5 & 010105\ldots
       \end{array}
   \]  
   Let $f \in \overline{C}^{[1]}$ with $f \neq p$, and let 
   $k \in N_0$ be minimal with $f(k) \neq p(k)$.
   Let $g \in C^{[1]}$ be such that $g|_{\{0,\ldots, k\}}
   = f|_{\{0,\ldots, k\}}$.
   We distinguish three cases.
   \begin{itemize}
     \item Case $k=0$: Then $g(0) \neq 0$, and therefore
   $g = c_{g(0)}$. If $f = c_{g(0)}$, we have $f \in C$. If $f \neq c_{g(0)}$, we
   let $y$ be minimal with $f(y) \neq g(0)$. We interpolate $f$ 
   at $\{0, y \}$ by a function $h \in C$. This function  $h$ is not
   constant and satisfies
   $h(0) \neq 0$. Such a function does not exist in $C$, therefore
    the case $f \neq c_{g(0)}$ cannot occur.
     \item Case $k = 1$: Then $g(1) \neq 1$. By examining the functions in $M$,
   we see that $g = c_0$. If $f = c_0$, we have $f \in C$. If $f \neq c_{0}$, we
    let $y$ be minimal with $f(y) \neq 0$. Interpolating $f$
   at $\{0,1, y\}$ by $h \in C$, we obtain a function $h \in C$
   with $h(0) = h(1) = 0$ and $h (y) \neq 0$. Such a function
   does not exist in $C$; this  contradiction shows $f = c_0$ and
   therefore $f \in C$.
    \item Case $k \ge 2$: Then $g = g_k$. If $f = g_k$, then
   $f \in C$. If $f \neq g_k$, we choose $y$ minimal with
   $f(y) \neq g_k (y)$ and interpolate $f$ at $\{0,1,\ldots, k\} \cup \{y\}$
   by a function $h \in C$. Again, such a function is not
   available in $C$, and therefore $f = g_k \in C$.
  \end{itemize}
  Thus every $f \in \overline{C}^{[1]}$ with $f\neq p$ is an element
  of $C$.
  By its definition, $C$ contains all constant unary operations in $\N_0$.
  Since $C$ preserves the relation $\rho = \{ (a,b,c,d) \in A^4 \setsuchthat a = b \text{ or } c = d\}$,
  also $\overline{C}$ preserves $\rho$. Hence by \cite[Lemma~1.3.1(a)]{PK:FUR},
  every function in $\overline{C}$ is essentially unary and hence 
  of the form
  $l(x_1,\ldots, x_n) = f(x_j)$ with $n \in \N$, $j \in \{1,\ldots, n\}$, and
  $f \in \overline{C}\ar{1} = M \cup \{p\}$.
  This implies that $\overline{C}$ is countable.
  The function $p$ witnesses
  $C \neq \overline{C}$. \end{proof}

\section{Constantive Clones}
  In constantive clones, a finite base of equality for the functions of arity $m$
  yields
  finite bases of equality for all other arities. This will allow to refine
  Theorem~\ref{thm:nc}. 
  \begin{lem} \label{lem:3}
      Let $C$ be a clone on the set $A$, let $m \in \N$, and let $D \subseteq A^m$
      be a base of equality for $C\ar{m}$. Then the projection of $D$ to the
      first component $\pi_1 (D)$ is a base of equality
      for $C\ar{1}$.
  \end{lem}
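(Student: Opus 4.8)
The plan is to lift unary members of $C$ to $m$-ary ones and transport the hypothesis along this lifting. Since $C$ is a clone it contains the first $m$-ary projection $e$, given by $e(x_1,\ldots,x_m) = x_1$, and it is closed under composition. Hence for any $u \in C\ar{1}$ the function $\tilde u := u \circ e$, that is $\tilde u(x_1,\ldots,x_m) = u(x_1)$, lies in $C\ar{m}$. This lifting $u \mapsto \tilde u$ is the only clone-theoretic ingredient needed, and it is well defined precisely because every clone contains the projections and is closed under composition.

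First I would take two unary functions $u, v \in C\ar{1}$ that agree on $\pi_1(D)$ and form their lifts $\tilde u, \tilde v \in C\ar{m}$. For an arbitrary tuple $d = (d_1,\ldots,d_m) \in D$ we have $\tilde u(d) = u(d_1)$ and $\tilde v(d) = v(d_1)$; since $d_1 \in \pi_1(D)$, the assumption $u|_{\pi_1(D)} = v|_{\pi_1(D)}$ gives $u(d_1) = v(d_1)$. Thus $\tilde u$ and $\tilde v$ agree on every element of $D$, i.e.\ $\tilde u|_D = \tilde v|_D$.

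Now I would invoke the hypothesis that $D$ is a base of equality for $C\ar{m}$ to conclude $\tilde u = \tilde v$. Evaluating this equality at the constant tuple $(a,\ldots,a)$ for each $a \in A$ recovers $u(a) = v(a)$, so $u = v$, and by the definition of a base of equality this shows that $\pi_1(D)$ is a base of equality for $C\ar{1}$.

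As for obstacles, there is essentially none: the argument is a direct transport of the defining property through the projection and composition operations available in any clone. The only point requiring care is the final recovery of $u$ from $\tilde u$, which uses that each $a \in A$ occurs as the first coordinate of some tuple in $A^m$; this is harmless unless $A = \emptyset$, in which case the statement is vacuous.
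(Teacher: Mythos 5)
Your proposal is correct and follows exactly the paper's own argument: lift $u,v \in C\ar{1}$ to $m$-ary functions via composition with the first projection, observe that the lifts agree on $D$, apply the base-of-equality hypothesis for $C\ar{m}$, and recover $u = v$. The only difference is cosmetic: you spell out why the lifts lie in $C\ar{m}$ and how $\tilde u = \tilde v$ yields $u = v$ (evaluation at diagonal tuples), steps the paper leaves implicit.
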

   \begin{proof}
      Let $f,g \in C\ar{1}$ such that $f|_{\pi_1 (D)} = g|_{\pi_1 (D)}$.
                 Let $f_1 (x_1, \ldots, x_m) := f(x_1)$ and 
                     $g_1 (x_1, \ldots, x_m) := g(x_1)$. Then for every
      $(d_1, \ldots, d_m) \in D$, we have
      $f_1 (d_1,\ldots, d_m) = f(d_1) = g(d_1) = g_1 (d_1,\ldots, d_m)$,
      and therefore $f_1 = g_1$, which implies $f=g$. 
   \end{proof}
\begin{lem} \label{lem:2}
     Let $A$ be a set, let $C$ be a constantive clone on $A$, and let $D \subseteq A$ be
     a base of equality for $C^{[1]}$. Then for every $n \in \N$,
     $D^n$ is a base of equality for $C^{[n]}$.
\end{lem}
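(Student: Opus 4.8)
The plan is to show directly that any two functions $f, g \in C\ar{n}$ that agree on $D^n$ must already be equal on all of $A^n$, by freeing the coordinates one at a time and invoking the unary base-of-equality hypothesis at each step. The key construction is this: given a tuple whose entries I wish to treat as fixed, constantivity lets me substitute the corresponding constant unary operations into $f$, so that fixing all but one coordinate of $f$ to prescribed values $c_1,\dots,c_{i-1},c_{i+1},\dots,c_n \in A$ produces a genuinely unary function $x \mapsto f(c_1,\dots,c_{i-1},x,c_{i+1},\dots,c_n)$ lying in $C\ar{1}$, and likewise for $g$. This reduction to one-variable functions is what makes the hypothesis on $D$ usable.

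The induction I would run is on $k \in \{0,1,\dots,n\}$, with the statement that $f$ and $g$ agree on $A^k \times D^{n-k}$. The base case $k=0$ is exactly the hypothesis $f|_{D^n} = g|_{D^n}$. For the inductive step, I would fix arbitrary $a_1,\dots,a_k \in A$ and $d_{k+2},\dots,d_n \in D$ and consider the unary functions $\hat f(x) := f(a_1,\dots,a_k,x,d_{k+2},\dots,d_n)$ and $\hat g(x)$ defined analogously; both lie in $C\ar{1}$ by the construction above. For $x \in D$ the argument tuple lies in $A^k \times D^{n-k}$, so the induction hypothesis gives $\hat f|_D = \hat g|_D$; since $D$ is a base of equality for $C\ar{1}$, this forces $\hat f = \hat g$, and evaluating at an arbitrary $a_{k+1} \in A$ shows that $f$ and $g$ agree on $A^{k+1} \times D^{n-k-1}$. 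Taking $k = n$ then yields $f = g$.

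The only place the hypotheses really bite is the reduction to unary functions: constantivity is needed both to clamp the already-freed coordinates $1,\dots,k$ to arbitrary elements of $A$ and to clamp the not-yet-freed coordinates to elements of $D$, while it is the clone axioms (closure under composition with these constants) that guarantee the resulting one-variable functions again belong to $C$. I expect the bookkeeping of which coordinates are currently free to be the only mild obstacle; once the unary reduction is in place, the base-of-equality property of $D$ for $C\ar{1}$ does all the work, and no case distinction or appeal to the quasigroup structure is required.
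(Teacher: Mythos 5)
Your proof is correct and is essentially the paper's own argument: the decisive step in both is identical, namely clamping all coordinates but one with constant unary operations (available by constantivity) to produce functions in $C^{[1]}$, and then invoking the hypothesis that $D$ is a base of equality for $C^{[1]}$. The only difference is bookkeeping: the paper inducts on the arity $n$, with induction hypothesis that $D^{n-1}$ is a base of equality for $C^{[n-1]}$, whereas you fix $f,g$ and induct on the number $k$ of freed coordinates, proving agreement on $A^k \times D^{n-k}$; unrolled, the two inductions perform exactly the same coordinate-by-coordinate sweep.
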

\begin{proof}
  We proceed by induction on $n$. If $n=1$, $D^1 = D$ is a base of equality
  of $C^{[1]}$ by assumption. For the induction step, let $n \ge 2$, and suppose
  that $D^{n-1}$ is a base of equality for $C^{[n-1]}$.
  Let $f, g \in C^{[n]}$ and assume $f|_{D^n} = g|_{D^n}$.
  We first show
  \begin{equation} \label{eq:1}
       f|_{A \times D^{n-1}} = g|_{A \times D^{n-1}}.
  \end{equation}
  Let $(a, d_2, \ldots, d_n) \in A \times D^{n-1}$, and define
  $f_1 (x) := f(x, d_2, \ldots, d_n)$ and $g_1 (x) := g (x, d_2, \ldots, d_n)$ for $x \in A$.
  Then $f_1, g_1 \in C^{[1]}$ and $f_1|_D = g_1|_D$. Hence $f_1 = g_1$, and thus
  $f (a, d_2, \ldots, d_n) = f_1 (a) = g_1 (a) = g (a, d_2, \ldots, d_n)$, which completes the
  proof of~\eqref{eq:1}.
  We will now prove that $f = g$. Let $(b_1,\ldots, b_n) \in A^n$, and
  define  $f_2 (x_2,\ldots, x_n) := f(b_1, x_2,\ldots, x_n)$, $g_2 (x_2, \ldots, x_n) :=
  g (b_1, x_2, \ldots, x_n)$ for all $x_2, \ldots, x_n \in A$. By~\eqref{eq:1},
  $f_2|_{D^{n-1}} = g_2|_{D^{n-1}}$, and therefore by the induction hypothesis
  $f_2 = g_2$. Thus $f(b_1,\ldots, b_n) = g(b_1, \ldots, b_n)$. 
\end{proof}
  Hence, for constantive clones we can give the following slight refinement of Theorem~\ref{thm:nc}.
\begin{thm} \label{thm:3}
 Let $A$ be a set with $|A| = \aleph_0$, let $C$ be a constantive clone
   on $A$ with quasigroup operations, and let $m \in \N$.
   Then the following are equivalent:
   \begin{enumerate}
    \item \label{it:c1} $|(\PIC)\ar{1}| \le \aleph_0$.
    \item \label{it:c2} $C\ar{1}$ has a finite base of equality.
    \item \label{it:c3} $C\ar{m}$ has a finite base of equality.
    \item \label{it:c4} $|C| \le \aleph_0$ and $\exists d \in \N$ $\forall n \in \N$ : $C\ar{n} = (\Loc{d^n+1} C)\ar{n}$.
    \item \label{it:c5} $|C| \le \aleph_0$ and $\forall n \in \N$ $\exists k \in \N$ : $C\ar{n} = (\Loc{k} C)\ar{n}$.
    \item \label{it:c6} $|C| \le \aleph_0$ and $C = \PIC$.
   \end{enumerate}
\end{thm}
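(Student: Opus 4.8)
The plan is to prove Theorem~\ref{thm:3} by establishing a cycle of implications that passes through the equivalences already available in Theorem~\ref{thm:nc}, using Lemmas~\ref{lem:3} and~\ref{lem:2} to move freely between the unary and $m$-ary (and general $n$-ary) notions of ``finite base of equality.'' The key new feature compared with Theorem~\ref{thm:nc} is that constantivity lets a single finite base of equality at \emph{one} arity be transported to \emph{all} arities, which is exactly what yields the uniform bound $d^n+1$ in~\eqref{it:c4}. So the strategy is: first collapse the three base-of-equality conditions \eqref{it:c1}--\eqref{it:c3} among themselves and to the unary case, then feed the unary base into Lemma~\ref{lem:2} to get explicit $n$-ary bases of size $d^n$, and finally connect to~\eqref{it:c5} and~\eqref{it:c6} by citing Theorem~\ref{thm:nc}.

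\emph{Connecting the base-of-equality conditions.} First I would show \eqref{it:c1}$\Rightarrow$\eqref{it:c2} directly from Lemma~\ref{lem:BE} with $m=1$: the hypothesis $|(\PIC)\ar{1}|\le\aleph_0$ is precisely what Lemma~\ref{lem:BE} needs to produce a finite base of equality for $C\ar{1}$. For \eqref{it:c2}$\Rightarrow$\eqref{it:c3}, I would invoke Lemma~\ref{lem:2}: if $D\subseteq A$ is a finite base of equality for $C\ar{1}$, then $D^m$ is a finite base of equality for $C\ar{m}$. Conversely, \eqref{it:c3}$\Rightarrow$\eqref{it:c2} is Lemma~\ref{lem:3} with $n=m$: the projection $\pi_1(D)$ of a finite $m$-ary base is a finite unary base. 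This makes \eqref{it:c1}, \eqref{it:c2}, \eqref{it:c3} all equivalent with little work.

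\emph{Producing the uniform bound.} The new content is \eqref{it:c2}$\Rightarrow$\eqref{it:c4}. Given a finite base of equality $D\subseteq A$ for $C\ar{1}$, set $d:=|D|$. By Lemma~\ref{lem:2}, $D^n$ is a base of equality for $C\ar{n}$ for every $n$, with $|D^n|=d^n$. Applying Lemma~\ref{lem:boe} with this base (so $k=|D^n|+1=d^n+1$) yields $C\ar{n}=(\Loc{d^n+1}C)\ar{n}$ for every $n$, which is the universally-quantified statement in~\eqref{it:c4}; and the countability $|C|\le\aleph_0$ follows as in the proof of Theorem~\ref{thm:nc}, since $\varphi:C\ar{n}\to A^{D^n}$, $f\mapsto f|_{D^n}$ is injective and $A$ is countable.

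\emph{Closing the loop.} The remaining implications are essentially free. \eqref{it:c4}$\Rightarrow$\eqref{it:c5} is immediate: taking $k:=d^n+1$ witnesses the existential in~\eqref{it:c5}. Then \eqref{it:c5}$\Rightarrow$\eqref{it:c6} and \eqref{it:c6}$\Rightarrow$\eqref{it:c1} can be obtained from Theorem~\ref{thm:nc}: condition~\eqref{it:c5} here is verbatim condition~\eqref{it:m5} of Theorem~\ref{thm:nc}, condition~\eqref{it:c6} is~\eqref{it:m6}, and \eqref{it:c1} follows from~\eqref{it:m6} since $(\PIC)\ar{1}\subseteq\PIC=C$ is countable. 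I expect the only genuine obstacle to be bookkeeping: making sure the constantivity hypothesis is in force precisely where Lemma~\ref{lem:2} is applied (that lemma requires $C$ constantive), and checking that the quasigroup hypothesis is carried along so that Lemma~\ref{lem:BE} and Theorem~\ref{thm:nc} apply. Both hypotheses are part of the standing assumptions of Theorem~\ref{thm:3}, so the argument goes through cleanly once the cycle of implications is laid out in the order above.
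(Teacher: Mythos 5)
Your proposal is correct and follows essentially the same route as the paper: the cycle \eqref{it:c1}$\Rightarrow$\eqref{it:c2}$\Rightarrow$\eqref{it:c4}$\Rightarrow$\eqref{it:c5}$\Rightarrow$\eqref{it:c6}$\Rightarrow$\eqref{it:c1} together with \eqref{it:c2}$\Leftrightarrow$\eqref{it:c3}, using Lemma~\ref{lem:BE}, Lemmas~\ref{lem:3} and~\ref{lem:2}, and Lemma~\ref{lem:boe} with $k = d^n+1$ exactly as the paper does. The only cosmetic difference is that you cite Theorem~\ref{thm:nc} for \eqref{it:c5}$\Rightarrow$\eqref{it:c6}, whereas the paper repeats the same one-line inclusion argument $(\PIC)\ar{n} \subseteq (\Loc{k} C)\ar{n} = C\ar{n}$ directly.
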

 \begin{proof}
    \eqref{it:c1}$\Rightarrow$\eqref{it:c2}: Lemma~\ref{lem:BE}.

    \eqref{it:c2}$\Rightarrow$\eqref{it:c3}: Lemma~\ref{lem:2}.

    \eqref{it:c3}$\Rightarrow$\eqref{it:c2}: Lemma~\ref{lem:3}.

    \eqref{it:c2}$\Rightarrow$\eqref{it:c4}: Let $D$ be a finite base of equality for $C\ar{1}$. 
           Let $n \in \N$, and set $k := |D|^n + 1$.
         By Lemma~\ref{lem:2}, $D^n$ is a base of equality for $C\ar{n}$, and 
     Lemma~\ref{lem:boe} yields $C\ar{n} = (\Loc{k} C)\ar{n}$. Since $D^n$ is
     a finite base of equality, the mapping $f \mapsto f|_{D^n}$ is an injective
     mapping from $C\ar{n}$ to $A^{D^n}$, making $C\ar{n}$ countable.
     Since $C\ar{n}$ is countable for every $n \in \N$, we obtain $|C| \le \aleph_0$.

    \eqref{it:c4}$\Rightarrow$\eqref{it:c5}: Set $k: = d^n + 1$.

    \eqref{it:c5}$\Rightarrow$\eqref{it:c6}: Let $n \in \N$, and $k$ be produced
      by \eqref{it:c5}. Then
        $(\PIC)\ar{n} \subseteq (\Loc{k} C)\ar{n} = C\ar{n}$.
    
    \eqref{it:c6}$\Rightarrow$\eqref{it:c1}: We have $(\PIC)\ar{1} \subseteq \PIC \subseteq C$.  

     \end{proof}

\section*{Acknowledgements}
The author thanks Mike Behrisch for furnishing information on
the reference \cite{Po:AGGT}.

\bibliographystyle{plain}
\def\cprime{$'$}

\begin{flushleft}
\begin{small}
Institute for Algebra, Johannes Kepler University Linz,
Austria \\
{\tt erhard@algebra.uni-linz.ac.at}\\
\end{small}
\end{flushleft}

\end{document}